\newtheorem{thm}{Theorem}[section]
\newtheorem{lem}[thm]{Lemma}
\theoremstyle{definition}
\newtheorem{rem}[thm]{Remark}
\title[]{Small mass uniqueness in the anisotropic liquid drop model and the critical mass problem}
\author{Emanuel Indrei}
\address{Department of Mathematics and Statistics\\
Sam Houston State University\\
Huntsville, TX \\
USA.}
\begin{document}
\setcounter{page}{1}
\pagenumbering{arabic}
\maketitle

\begin{abstract}
Small mass uniqueness in the anisotropic atomic liquid drop model for all values of the parameters is obtained and the critical mass conjecture is investigated.
\end{abstract}

\section{Introduction}
\subsection{Atomic Model}
Two main ingredients define the binding energy of a set of finite perimeter $E \subset \mathbb{R}^n$ with reduced boundary $\partial^* E$: the surface energy
$$
\mathcal{F}(E)=\int_{\partial^* E} f(\nu_E) d\mathcal{H}^{n-1}
$$
and the nonlocal Coulomb repulsion energy
$$
\mathcal{D}(E)=\mathcal{D}_{\lambda}(E)=\frac{1}{2} \int \int_{E \times E} \frac{1}{|z-y|^\lambda}dz dy.
$$
The binding energy is the sum:
$$
\mathcal{E}(E)=\mathcal{F}(E)+\mathcal{D}(E).     
$$
In the classical context, $\lambda=1$, $n=3$, $f(x)=|x|$. Gamow developed this model of atomic nuclei in his 1930 paper \cite{row52} and it successfully predicts the spherical shape of nuclei and the non-existence of nuclei with a large atomic number. More specifically, $E\subset \mathbb{R}^3$ is a nucleus with constant density assumed to equal one, and the number of nucleons corresponds approximately to $|E|=m$. In the surface energy, the surface tension keeps the  nucleus  together and the repulsion energy encodes the repulsion among the protons. The main feature of the minimal binding energy with a mass constraint 

$$
E(m)=\inf_{|E|=m} \mathcal{E}(E).
$$

\noindent is the inclination of the surface energy to generate a ball and the tendency of the Coulomb repulsion energy to avoid the ball via the fact that the ball maximizes the Coulomb energy with a mass constraint. 
In this way, the existence is not in general anticipated true for any mass yielding the non-existence of nuclei with a large atomic number. In the $30$s, the theory was developed further by von Weizs\"acker \cite{row57} and Bohr \cite{row576}. Recently, the model was connected to: Ohta-Kawasaki, Ginzburg-Landau, Thomas-Fermi, Wigner crystallization, and the Thomas-Fermi-Dirac-von Weizs\"acker electron model \cite{MR3251907, MR3727064}. Moreover, it is interconnected with the quantum ionization conjecture: the number of electrons bound to an atomic nucleus of charge $Z$ cannot exceed $Z+1$ \cite{MR3251907, qknm, MR1098611, MR2018928}.

Assuming small mass, thanks to the scaling, the main term is the surface energy.
Hence in physics literature the expectation is existence of a critical mass $m_*$ such that for $m \le m_*$, the ball is the unique minimizer and for $m>m_*$, non-existence is true \cite{MR3425373, MR4314139}. The critical mass is conjectured to be
$$ 
m_*=5\frac{2^{1/3}-1}{1-2^{-2/3}} \approx 3.512.
$$  
Supposing that $E(m)$ does not have a minimizer when $m>m_*$ and a lower bound on a modulus, the expected conclusion is true, see Theorem \ref{@'r}. The modulus generates a phase change in the case that the lower bound is true. In addition, an upper bound on the modulus holds which is consistent with the condition on the expected lower bound, see Remark \ref{zw7}. Assuming $m$ and $\epsilon>0$ small, the modulus has the form 
$
q(\epsilon)m^{\frac{n-1}{n}},
$
see Theorem \ref{@'}. Theorem \ref{@'rj} yields that $E(m)$ does not have a minimizer when $m>m_{\lambda,n, f_{\mathbb{S}^{n-1}}}$, where the constant is explicit  for $\lambda \in (0,1]$.

The mathematical model encodes more freedom. Assuming $\lambda <n$ is natural to make the Coulomb repulsion energy well-defined. Also, the anisotropic surface energy is more physical via the minimizer of the surface energy not necessarily confined to be the ball. Theorem \ref{@'} proves the small mass uniqueness (mod translations) in this optimal range when $f$ is a surface tension, i.e. a convex positively 1-homogeneous 
$f:\mathbb{R}^n\rightarrow [0,\infty)$,
with $f(x)>0$ if $|x|>0$. In the argument, the central ingredients are: (i) uniqueness of the Wulff shape in the anisotropic isoperimetric inequality proved by Fonseca and M\"uller \cite{MR1130601}; (ii) a scaling argument; (iii) compactness.

 The explicit bound contained in Theorem \ref{@'rj} is near the conjectured mass in the isotropic context (the unit ball in $\mathbb{R}^n$ is denoted by $B$ and the $(n-1)$ unit ball by $B^{n-1}$) 
\begin{equation} \label{ar9}
\text{Conjectured critical mass \cite{MR4314139}}=|B|\Big(\frac{2^{1/n}-1}{1-2^{(\lambda-n)/n}} \frac{\mathcal{H}^{n-1}(\mathbb{S}^{n-1})}{\mathcal{D}(B)}\Big)^{\frac{n}{n-\lambda+1}}
\end{equation}
$$
\text{Obtained mass}=|B|\Big(\frac{|B|}{|B^{n-1}|}\frac{\mathcal{H}^{n-1}(\mathbb{S}^{n-1})}{\mathcal{D}_{\lambda-1}(B)}\Big)^{\frac{n}{n-\lambda+1}}.
$$
The Gamow theory is related to several other minimization problems \cite{ I21, qk, bessas2023shape, MR4373173}. 

\subsection{Main contributions}
\subsubsection{Isotropic tensions} If $\lambda=1$, $n=3$ Choksi and Peletier are attributed with the initial conjecture in more modern language that when a minimizer exists it is a ball \cite{MR2653253}. Kn\"upfer and Muratov obtained \cite{MR3055587, MR3272365}: \\

\noindent (i) if $n=2$, $\lambda$ is small, balls are the unique minimizers for $m \le m_*$ and no minimizer exists for $m>m_*$;\\
(ii) if $n=2$, $\lambda \in (0,2)$ or $3\le n \le 7$, $\lambda \in (0,n-1)$, and the mass is small, balls are the only minimizers;\\
(iii) if $n \ge 2$, $\lambda \in (0,2)$, there is $m_{a}>0$ such that if $m>m_a$, no minimizer exists.\\

\noindent Supposing $\lambda=1$, $n=3$, Otto and Lu \cite{MR3251907} independently showed non-existence for masses above some mass $m_e>0$ which was not explicitly estimated (observe that $m_a$ was also not explicitly estimated).
Muratov and Zaleski \cite{MR3302176} obtained (i) if $\lambda \le .034$. Bonacini and Cristoferi \cite{MR3226747} proved (i) and (ii) for a general dimension $n \ge 2$. Figalli, Fusco, Maggi, Millot, and Morini \cite{MR3322379} obtained (ii) in the optimal range $\lambda \in (0,n)$ \& $n\ge 2$ and also a uniqueness theorem was obtained by Julin in \cite{MR3218265}. Frank and Nam \cite{MR4314139} proved existence for $m\in (0,m_*]$, $\lambda \in (0,n)$, and $n \ge 2$. Assuming $E(m)$ does not have a minimizer for $m>m_*$, balls were proved to be the unique minimizers when $m<m_*$; furthermore, they were proved to also be minimizers when $m=m_*$, cf. Frank and Lieb \cite{MR3425373} \& \cite{MR3981098}. The main new element of Theorem \ref{@'r} is the modulus which encodes also the $m=m_*$ uniqueness.
Supposing $\lambda \le 2$ \& $\lambda \in (0,n)$, Frank and Nam proved non-existence with a nonexplicit range \cite{MR4314139}; assuming $\lambda=1$, $n=3$, the non-existence was obtained by Frank, Killip, and Nam for $m>8$ \cite{qkn}.
\subsubsection{Anisotropic tensions} The main existence of bounded minimizers assuming $m$ small was proved by Choksi, Neumayer, and Topaloglu \cite{MR4385589}. Moreover, the non-existence was also proved assuming $\lambda \in (0,2)$ for masses above some mass $m_e>0$ which was not explicitly estimated. In the case of a specific crystalline tension, uniqueness was shown when the mass is small. Misiats and Topaloglu \cite{qko} investigated convergence of scaled minimizers to the Wulff shape. Assuming a weighted perimeter, existence of bounded minimizers was proved by Alama, Bronsard, Topaloglu, and Zuniga in \cite{MR4172686}. \\
\vskip 1.3in

\subsection{Results}

\begin{thm} \label{@'}
If $\lambda \in (0,n)$, $n \ge 2$, there exists $m_0=m_0(n,\lambda,f)>0$ and a modulus of continuity $q(0^+)=0$ such that for all $m<m_0$ there exists $\epsilon_0>0$ such that for all $0<\epsilon<\epsilon_0$ $\&$ for all minimizers $E_m \subset B_R$, $E \subset B_R$, $|E|=|E_m|=m<m_0$, if

$$
|\mathcal{E}(E_m)-\mathcal{E}(E)| < q(\epsilon)m^{\frac{n-1}{n}},
$$
there exists $x_0 \in \mathbb{R}^n$ such that  
$$
\frac{|(E+x_0) \Delta E_m|}{|E_m|}+\Big(\frac{|K|}{m}\Big)^{\frac{n-1}{n}} |\mathcal{F}(E)-\mathcal{F}(E_m)| < \epsilon.
$$
\end{thm}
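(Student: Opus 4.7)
The plan is to reduce the full problem to the pure anisotropic isoperimetric problem via rescaling to unit mass, and then to combine the Fonseca--M\"uller uniqueness of the Wulff shape with a compactness/contradiction argument to produce the modulus $q$. First I would rescale: for any set $G$ of mass $m$ put $\widetilde G := m^{-1/n} G$, so $|\widetilde G|=1$. The homogeneities $\mathcal{F}(\alpha G) = \alpha^{n-1}\mathcal{F}(G)$ and $\mathcal{D}(\alpha G) = \alpha^{2n-\lambda}\mathcal{D}(G)$ yield
$$
\mathcal{E}(G) = m^{(n-1)/n}\mathcal{F}(\widetilde G) + m^{(2n-\lambda)/n}\mathcal{D}(\widetilde G),
$$
so that, with $\alpha := (n-\lambda+1)/n > 0$ (strict positivity follows from $\lambda<n$), division of the hypothesis by $m^{(n-1)/n}$ gives
$$
\bigl|\mathcal{F}(\widetilde E_m)-\mathcal{F}(\widetilde E) + m^\alpha\bigl(\mathcal{D}(\widetilde E_m)-\mathcal{D}(\widetilde E)\bigr)\bigr| < q(\epsilon).
$$
In this regime the Coulomb piece $m^\alpha \mathcal{D}$ is a lower-order perturbation of the anisotropic surface energy.

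Next I would argue by contradiction. If the theorem failed, there would exist $\epsilon_0>0$, masses $m_k \in (0,m_0)$, and pairs $(E_{m_k},E_k)$ with energy gap at most $q_k\, m_k^{(n-1)/n}$ for some $q_k\to 0$, yet violating the conclusion at the fixed level $\epsilon_0$. After rescaling, $\widetilde E_{m_k}$ minimizes and $\widetilde E_k$ almost minimizes the perturbed energy $\mathcal{F}+m_k^\alpha\mathcal{D}$ among unit-mass sets, with $|\mathcal{F}(\widetilde E_{m_k})-\mathcal{F}(\widetilde E_k)|\to 0$. Comparing $\widetilde E_{m_k}$ against a fixed translate of the unit-mass Wulff shape shows $\mathcal{F}(\widetilde E_{m_k})\to \inf\{\mathcal{F}(G):|G|=1\}$, and hence $\mathcal{F}(\widetilde E_k)$ tends to the same infimum. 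By BV compactness and the uniform perimeter bound, after a suitable translation both sequences converge in $L^1_{\mathrm{loc}}$ to the same limit, which by the Fonseca--M\"uller uniqueness of the Wulff shape in the anisotropic isoperimetric inequality must be (a translate of) the unit-mass Wulff shape. Scaling back by $m_k^{1/n}$ produces translations $x_k$ for which $|(E_k+x_k)\Delta E_{m_k}|/m_k \to 0$ and $(|K|/m_k)^{(n-1)/n}|\mathcal{F}(E_k)-\mathcal{F}(E_{m_k})|\to 0$, contradicting the lower bound $\epsilon_0$. The modulus $q$ is then extracted by setting it equal to the largest threshold making the implication hold at each level $\epsilon$, intersected with $\epsilon$ to guarantee $q(0^+)=0$.

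The main obstacle is maintaining compactness under the rescaling: the containment $E_m,E\subset B_R$ becomes $\widetilde E_m,\widetilde E\subset B_{Rm_k^{-1/n}}$, a ball whose radius blows up as $m_k\to 0$, so translations must be controlled carefully to prevent mass from escaping to infinity and to rule out splitting caused by the nonlocal Coulomb interaction. Here the a priori boundedness (up to translation) of small-mass minimizers of $\mathcal{E}$ established by Choksi, Neumayer, and Topaloglu, together with the uniform equi-integrability of the perturbed surface energies, renders Fonseca--M\"uller applicable and closes the contradiction; a secondary technical point is that along any subsequence with $m_k\to m^\ast>0$ the same scheme still identifies the limit as the Wulff shape, which follows from the lower-order nature of the Coulomb perturbation at each fixed small mass.
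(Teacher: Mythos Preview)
Your overall architecture---contradiction, rescaling to unit mass, showing the isoperimetric deficit of both rescaled sequences tends to zero, then invoking Fonseca--M\"uller uniqueness---is exactly the paper's strategy. The gap is in the passage from ``$\mathcal{F}(\widetilde E_k)\to\min$'' to ``$\widetilde E_k\to$ Wulff shape in $L^1$''. BV compactness only gives $L^1_{\mathrm{loc}}$ convergence, and since after rescaling the containing ball has radius $Rm_k^{-1/n}\to\infty$, nothing you have written rules out mass escaping to infinity or splitting into two far-apart pieces. Your proposed fix, the a~priori boundedness of small-mass minimizers from Choksi--Neumayer--Topaloglu, applies only to the minimizer $\widetilde E_{m_k}$; the competitor $\widetilde E_k$ is \emph{not} a minimizer, so no diameter bound is available for it, and ``uniform equi-integrability of the perturbed surface energies'' does not address this.

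The paper closes this gap by a concentration--compactness dichotomy applied to \emph{both} sequences. Once $\delta(\gamma_k E_k)\to 0$, if the $L^1_{\mathrm{loc}}$ limit had mass strictly below $|K|$, one uses a nucleation lemma and the dichotomy to split $\gamma_k E_k$ into pieces $A_k^\sigma$, $N_k^\sigma$ of asymptotic mass fractions $\xi,\,1-\xi\in(0,1)$ with $\mathrm{dist}(A_k^\sigma,N_k^\sigma)\to\infty$; applying the anisotropic isoperimetric inequality to each piece and passing to the limit gives $1\ge \xi^{(n-1)/n}+(1-\xi)^{(n-1)/n}$, contradicting the strict concavity of $s\mapsto s^{(n-1)/n}$ on $(0,1)$. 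An alternative one-line fix, which the paper mentions in passing, is to invoke the quantitative anisotropic isoperimetric inequality of Figalli--Maggi--Pratelli: $\delta\to 0$ directly forces the Fraenkel asymmetry to vanish, yielding the required $L^1$ convergence to a translated Wulff shape without any compactness argument.
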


\begin{thm} \label{@'r}
Assume $f(x)=|x|$, $\lambda \in (0,n)$, $n \ge 2$, and $E(m)$ does not have a minimizer when $m>m_*$. There is a modulus $a(\lambda, m,\epsilon)$ when $m<m_*$ such that if 
$$
\limsup_{m \rightarrow m_*^-} a(\lambda, m,\epsilon)>0,
$$
the critical mass conjecture \eqref{ar9} is true.
Also,
$$
a(\lambda, m,\epsilon) \le \max\Big\{\Big(\frac{m}{|K|}\Big)^{\frac{n-1}{n}}, \frac{\mathcal{H}^{n-1}(\mathbb{S}^{n-1})}{2(n-\lambda)}\Big(\frac{m}{|B|}\Big)^{\frac{n-\lambda}{n}}m \Big\} \epsilon.
$$
\end{thm}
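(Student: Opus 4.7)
The approach is to define $a(\lambda,m,\epsilon)$ as a subcritical quantitative stability gap and then transfer uniqueness from $m<m_*$ to $m=m_*$ by a rescaling–continuity argument. Under the standing no-minimizer-above-$m_*$ hypothesis, the Frank–Nam results cited in the introduction give a ball minimizer at $m_*$ together with uniqueness of balls for every $m\in(0,m_*)$; hence $E(m)=\mathcal{E}(B_m)$ for $m\le m_*$, which is continuous in $m$. The remaining content of conjecture \eqref{ar9} is uniqueness (up to translation) at $m=m_*$, which is what the modulus $a$ encodes.

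\textbf{Definition and upper bound.} The modulus I would work with is
\[
a(\lambda,m,\epsilon) := \inf\bigl\{\mathcal{E}(E) - E(m) : |E|=m,\ \inf_{x\in\mathbb{R}^n}|E\Delta(B_m+x)|\ge \epsilon m\bigr\},
\]
where $B_m$ denotes a ball of mass $m$; this is the natural analog at mass $m<m_*$ of the threshold $q(\epsilon)m^{(n-1)/n}$ in Theorem \ref{@'}. For the upper bound a single competitor is enough: an explicit smooth perturbation of $B_m$ with symmetric difference exactly $\epsilon m$ produces an energy defect linear in $\epsilon$ whose constant is dominated by the two natural scales of $\mathcal{E}(B_m)$. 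The surface scale is $\mathcal{F}(B_m)=\mathcal{H}^{n-1}(\mathbb{S}^{n-1})(m/|B|)^{(n-1)/n}$, matching the first entry of the max since $|K|=|B|$ when $f(x)=|x|$. The Coulomb scale is controlled via Riesz rearrangement — using $\int_B|x-y|^{-\lambda}\,dx\le \mathcal{H}^{n-1}(\mathbb{S}^{n-1})/(n-\lambda)$ uniformly in $y\in B$ — by
\[
\mathcal{D}(B_m)\le \frac{\mathcal{H}^{n-1}(\mathbb{S}^{n-1})}{2(n-\lambda)}\Big(\frac{m}{|B|}\Big)^{\frac{n-\lambda}{n}}m,
\]
matching the second entry.

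\textbf{Main implication.} Assume $\limsup_{m\to m_*^-} a(\lambda,m,\epsilon)>0$ for every $\epsilon>0$, and suppose for contradiction that a second minimizer $F$ at mass $m_*$ satisfies $\inf_x|F\Delta(B_{m_*}+x)|\ge \delta m_*$ for some $\delta>0$. Extract a sequence $m_k\nearrow m_*$ with $a(\lambda,m_k,\delta)\ge c>0$, and set $F_k=(m_k/m_*)^{1/n}F$, so $|F_k|=m_k$. Scale invariance of the relative symmetric difference yields $\inf_x|F_k\Delta(B_{m_k}+x)|\ge \delta m_k$, so $F_k$ is admissible in the definition of $a(\lambda,m_k,\delta)$; in particular $\mathcal{E}(F_k)-E(m_k)\ge c$. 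On the other hand, continuity of $E(\cdot)$ at $m_*$ together with $\mathcal{E}(F_k)\to \mathcal{E}(F)=E(m_*)$ (by separate scaling of $\mathcal{F}$ and $\mathcal{D}$) gives $\mathcal{E}(F_k)-E(m_k)\to 0$, a contradiction.

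\textbf{Principal obstacle.} The technical heart is not this implication but the verification of the $\limsup$ hypothesis itself: the scaling argument behind Theorem \ref{@'} exploits surface-energy dominance at small mass, whereas near $m_*$ the Coulomb term is comparable, and maintaining a uniform positive lower bound on $a$ as $m\to m_*^-$ requires a refined compactness argument that tracks both terms jointly. The explicit upper bound in the theorem statement is a consistency check that $a$ does not blow up in the limit, compatible with — but much weaker than — the hoped-for lower bound.
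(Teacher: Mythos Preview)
Your rescaling--continuity argument for the main implication is essentially the paper's: scale a hypothetical non-ball minimizer at $m_*$ down to masses $m_k\nearrow m_*$, observe that its energy excess over $E(m_k)$ tends to zero while the modulus forces it to stay bounded below, and reach a contradiction. The paper likewise invokes Frank--Nam (their Theorem~2) for uniqueness on $(0,m_*)$ and for the existence of a ball minimizer at $m_*$, exactly as you do.

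Where you diverge is in the definition of the modulus and, as a consequence, in the upper bound. The paper's $a(\lambda,m,\epsilon)=w_m(\epsilon)$ is the threshold controlling not just the normalized symmetric difference but the sum
\[
\frac{|(E+x_0)\Delta E_m|}{|E_m|}+\Big(\frac{|K|}{m}\Big)^{\frac{n-1}{n}}\big|\mathcal{F}(E)-\mathcal{F}(E_m)\big|,
\]
so a weighted surface-energy discrepancy is built into it. The stated upper bound is then obtained not by exhibiting a competitor but by the elementary chain $w_m(s)\le |\mathcal{E}(E)-\mathcal{E}(E_m)|\le |\mathcal{F}(E)-\mathcal{F}(E_m)|+|\mathcal{D}(E)-\mathcal{D}(E_m)|$ for any $E$ realizing the value $s$: the first entry $(m/|K|)^{(n-1)/n}$ of the max appears purely algebraically as the reciprocal of the weight $(|K|/m)^{(n-1)/n}$ multiplying $|\mathcal{F}(E)-\mathcal{F}(E_m)|$ inside $s$. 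Your modulus omits the surface term, so your competitor sketch must bound $\mathcal{F}(E)-\mathcal{F}(B_m)$ directly by the symmetric difference; your identification of the ``surface scale'' $\mathcal{F}(B_m)=\mathcal{H}^{n-1}(\mathbb{S}^{n-1})(m/|B|)^{(n-1)/n}$ with $(m/|B|)^{(n-1)/n}$ is off by the factor $\mathcal{H}^{n-1}(\mathbb{S}^{n-1})=n|B|$, and in fact no single-competitor estimate will recover that exact constant, since it is not a perimeter bound at all but an artifact of how the paper packages $\mathcal{F}$ into the modulus. The Coulomb half of your sketch, via the uniform Riesz potential bound on $B$, does match the paper's second entry.
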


In the statement of the aforementioned theorem, the assumption is imposed with  $E(m)$ not admitting a minimizer when $m>m_*$. The next theorem almost achieves that assumption in a few cases also inclusive of the most important case $n=3, \lambda \in (0,1]$, $f|_{\mathbb{S}^{n-1}}=1$. 

\begin{thm} \label{@'rj}
Suppose $\lambda \in (0,1]$.   Then $E(m)$ does not have a minimizer when $m>m_{\lambda,n,f|_{\mathbb{S}^{n-1}}}$ with
$$
m_{1,n, f|_{\mathbb{S}^{n-1}}}=\frac{2\max_{\nu\in \mathbb{S}^{n-1}}\{f(\nu)\}\mathcal{H}^{n-1}(\mathbb{S}^{n-1})}{|B^{n-1}|},
$$
and assuming $\lambda \in (0,1)$, 
\begin{align*}
m_{\lambda,n, f_{\mathbb{S}^{n-1}}}&=|B|\Big(\frac{|B|}{|B^{n-1}|}\max_{\nu\in \mathbb{S}^{n-1}}\{f(\nu)\}\frac{\mathcal{H}^{n-1}(\mathbb{S}^{n-1})}{\mathcal{D}_{\lambda-1}(B)}\Big)^{\frac{n}{n-\lambda+1}}
\end{align*}

\end{thm}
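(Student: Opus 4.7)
The plan is a splitting-by-hyperplane argument, averaged over the normal direction, to produce an upper bound on $\mathcal{D}_{\lambda-1}(E)$ that is incompatible with $|E|=m$ being large. Suppose, for contradiction, that $E$ is a minimizer of $\mathcal{E}$ with $|E|=m$ exceeding the claimed threshold. For every $\nu\in\mathbb{S}^{n-1}$ and $t\in\mathbb{R}$, I split $E$ into $E_1=E\cap\{x\cdot\nu<t\}$ and $E_2=E\cap\{x\cdot\nu>t\}$, and compare $\mathcal{E}(E)$ with the energy of the competitor $E_1\cup(E_2+R\nu)$ as $R\to\infty$. Mass is preserved; the anisotropic perimeter gains exactly $\int_{E\cap\{x\cdot\nu=t\}}(f(\nu)+f(-\nu))\,d\mathcal{H}^{n-1}$ from the two new flat faces, while the Coulomb energy loses the cross-term $\int\!\int_{E_1\times E_2}|x-y|^{-\lambda}\,dx\,dy$ (the interaction between the two separated pieces vanishes as $R\to\infty$). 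Minimality therefore forces, for a.e.\ $t$ and every $\nu$,
$$\int_{E\cap\{x\cdot\nu=t\}}(f(\nu)+f(-\nu))\,d\mathcal{H}^{n-1}\ \ge\ \int\!\int_{E_1\times E_2}\frac{dx\,dy}{|x-y|^\lambda}.$$

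Next I would integrate this inequality in $t\in\mathbb{R}$. The left side becomes at most $2\max_{\mathbb{S}^{n-1}}f\cdot m$ by Fubini (using $\int_{\mathbb{R}}\mathcal{H}^{n-1}(E\cap\{x\cdot\nu=t\})\,dt=m$), while swapping the order on the right yields $\tfrac12\int\!\int_{E\times E}|(x-y)\cdot\nu|\,|x-y|^{-\lambda}\,dx\,dy$. Averaging over $\nu\in\mathbb{S}^{n-1}$ and using the elementary identity $\int_{\mathbb{S}^{n-1}}|z\cdot\nu|\,d\mathcal{H}^{n-1}(\nu)=2|B^{n-1}|\,|z|$ produces the key estimate
$$2\max_{\mathbb{S}^{n-1}}f\cdot m\cdot\mathcal{H}^{n-1}(\mathbb{S}^{n-1})\ \ge\ 2|B^{n-1}|\,\mathcal{D}_{\lambda-1}(E).$$
The denominator $|B^{n-1}|$ that appears in the target formula is produced directly by this averaging identity; this is the main new ingredient.

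For $\lambda=1$, the identity $\mathcal{D}_0(E)=\tfrac12 m^2$ collapses this inequality to the first claimed bound on $m$. For $\lambda\in(0,1)$, I would obtain a matching lower bound on $\mathcal{D}_{\lambda-1}(E)$ by applying Riesz rearrangement to the truncated, symmetric-decreasing kernel $(R^{1-\lambda}-|z|^{1-\lambda})_+$ with $R$ so large that $E\subset B_R$; this yields $\mathcal{D}_{\lambda-1}(E)\ge\mathcal{D}_{\lambda-1}(B_m)$, and combined with the scaling $\mathcal{D}_{\lambda-1}(B_m)=(m/|B|)^{(2n+1-\lambda)/n}\mathcal{D}_{\lambda-1}(B)$, solving for $m$ produces the displayed formula. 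The main technical obstacle is in the first step: making rigorous that for a.e.\ $t$ the cut $\{x\cdot\nu=t\}$ acts as the flat boundary of $E_1$ and $E_2$ with outward normals $\pm\nu$ so that the anisotropic perimeter identity above is exact; this is done via the slicing theory for sets of finite perimeter, but requires care. Boundedness of $E$ needed for the Riesz step is either inherited from the regularity theory of \cite{MR4385589} or can be bypassed, since an unbounded $E$ would make $\mathcal{D}_{\lambda-1}(E)=\infty$ and contradict the derived upper bound outright.
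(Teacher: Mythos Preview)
Your argument is correct and follows essentially the same route as the paper: the hyperplane splitting, the integration in $t$ followed by averaging over $\nu\in\mathbb{S}^{n-1}$ via the identity producing $|B^{n-1}|$, and then the Riesz rearrangement step for $\lambda\in(0,1)$ are exactly the paper's proof (which in turn credits the technique to Frank--Nam \cite{MR4314139} and Frank--Killip--Nam \cite{qkn}). The only noteworthy variation is in the Riesz step: the paper approximates $|z|^{1-\lambda}$ by the radially nondecreasing family $(1-e^{-t|z|^{1-\lambda}})/t$ and lets $t\to 0^+$, which avoids any appeal to boundedness of the minimizer; your truncation $(R^{1-\lambda}-|z|^{1-\lambda})_+$ is equally valid but does require $E\subset B_R$, and your fallback that an unbounded $E$ would force $\mathcal{D}_{\lambda-1}(E)=\infty$ is not quite right (finite measure does not preclude a finite integral), so you should rely on the regularity citation there.
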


\begin{rem} \label{zw7}
One interesting remark is that 
$$
\limsup_{m \rightarrow m_*^-}\max\Big\{\Big(\frac{m}{|K|}\Big)^{\frac{n-1}{n}}, \frac{\mathcal{H}^{n-1}(\mathbb{S}^{n-1})}{2(n-\lambda)}\Big(\frac{m}{|B|}\Big)^{\frac{n-\lambda}{n}}m \Big\} \epsilon>0.
$$
In particular, if a lower bound is analogous, then the imposed assumption on the $\limsup$ in the Theorem \ref{@'r} is true. The lower bound is valid in a related problem \cite{qk}. The technique also implies $q(\epsilon) \le \alpha \epsilon$ supposing $\epsilon, m$ small. Note that in the case $n=3, \lambda=1$, $f|_{\mathbb{S}^{n-1}}=1$, $m_*=5\frac{2^{1/3}-1}{1-2^{-2/3}} \approx 3.512$, not too far from $m_{1,3, 1}=8,$ first obtained in \cite{qkn}. Furthermore, if $\lambda \in (0,1)$, $m_{\lambda,n, 1}$ has a similar numerical value as the conjectured $m_*$. In addition, one may also obtain an inequality analogous to the one in Theorem \ref{@'rj} for $\lambda \in (1,2]$ utilizing \cite{MR4314139}. 

\end{rem}

\noindent Acknowledgement: I want to thank Rupert Frank for his pertinent and meticulous remarks which led to an enhanced content.

\section{The proofs}

\subsection{Proof of Theorem \ref{@'}}

\begin{proof}
Assume the theorem is false. Then for all $m_0>0$, for all moduli $q$ there exists $m<m_0$ such that for a fixed $\epsilon_0 >0$ there exists $\epsilon<\epsilon_0$ $\&$ there exist $E_{m, \epsilon_0}, E_{m, \epsilon_0}' \subset B_R$, $|E_{m, \epsilon_0}|=|E_{m, \epsilon_0}'|=m$ such that

$$
|\mathcal{E}(E_{m,\epsilon_0})-\mathcal{E}(E_{m,\epsilon_0}')| < q(\epsilon)m^{\frac{n-1}{n}},
$$ 
and

\begin{equation} \label{wl4}
\inf_{x_0 \in \mathbb{R}^n} \frac{|(E_{m,\epsilon_0}'+x_0) \Delta E_{m,\epsilon_0}|}{|E_{m,\epsilon_0}|} +\Big(\frac{|K|}{m}\Big)^{\frac{n-1}{n}} |\mathcal{F}(E_{m,\epsilon_0}')-\mathcal{F}(E_{m,\epsilon_0})| \ge \epsilon>0.
\end{equation}
Let $m_0=\frac{1}{k}$, $w_k \rightarrow 0^+$, $\hat{q}$ a modulus of continuity and define 

\begin{equation} \label{t3}
q_k=w_k \hat{q}(\epsilon),
\end{equation}
hence there exists $m_k<\frac{1}{k}$ such that for a fixed $\epsilon_0 >0$ there exists $\epsilon<\epsilon_0$ $\&$ there exist $E_{m_k, \epsilon_0}, E_{m_k, \epsilon_0}' \subset B_R$, $|E_{m_k, \epsilon_0}|=|E_{m_k, \epsilon_0}'|=m_k<\frac{1}{k}$ such that
$$
|\mathcal{E}(E_{m_k,\epsilon_0})-\mathcal{E}(E_{m_k,\epsilon_0}')| < q_k m_k^{\frac{n-1}{n}},
$$ 
and
\begin{equation} \label{wlje1}
\inf_{x_0 \in \mathbb{R}^n} \frac{|(E_{m_k,\epsilon_0}'+x_0) \Delta E_{m_k,\epsilon_0}|}{|E_{m_k,\epsilon_0}|} +(\frac{|K|}{m_k})^{\frac{n-1}{n}} |\mathcal{F}(E_{m_k,\epsilon_0}')-\mathcal{F}(E_{m_k,\epsilon_0})|\ge \epsilon>0.
\end{equation}
Set 
\begin{equation} \label{a_k}
a_k=q_k m_k^{\frac{n-1}{n}},
\end{equation} 

$E_{m_k}=E_{m_k,\epsilon_0}$, $E_{m_k}'=E_{m_k,\epsilon_0}'$. Also, define $\gamma_k=(\frac{|K|}{m_k})^{\frac{1}{n}}$ such that
$$|\gamma_k E_{m_k}|=|K|.$$  \\

\noindent Claim 1:
\begin{equation} \label{theconv}
\delta(\gamma_k E_{m_k}):=\frac{\mathcal{F}(\gamma_k E_{m_k})}{n|K|^{\frac{1}{n}}|\gamma_k E_{m_k}|^{\frac{n-1}{n}}} - 1 \rightarrow 0
\end{equation}
as $k \rightarrow \infty$.\\

\noindent Proof of Claim 1:\\
Observe thanks to the anisotropic isoperimetric inequality \cite{MR1130601} that 
\begin{align*}
\mathcal{F}((1/\gamma_k)K)+\mathcal{D}(E_{m_k}) \le \mathcal{E}(E_{m_k}) \le \mathcal{E}((1/\gamma_k)K).
\end{align*}
Hence

\begin{align*}
\delta(\gamma_k E_{m_k})(n|K|)&=\gamma_k^{n-1}|\mathcal{F}(E_{m_k})-\mathcal{F}((1/\gamma_k)K)| \le \gamma_k^{n-1}\Big(\mathcal{D}((1/\gamma_k)K)-\mathcal{D}(E_{m_k})\Big)\\
&\le \gamma_k^{n-1}\frac{\mathcal{H}^{n-1}(\mathbb{S}^{n-1})}{2(n-\lambda)}(\frac{|(1/\gamma_k)K|}{|B|})^{\frac{n-\lambda}{n}} | (1/\gamma_k)K) \Delta E_{m_k}|\\
&\le a_* m_k^{2-\frac{\lambda}{n}-\frac{n-1}{n}} \rightarrow 0,
\end{align*}
(with the inequality in \cite{MR3322379} used in estimating $\mathcal{D}((1/\gamma_k)K)-\mathcal{D}(E_{m_k})$).

By the triangle inequality,

\begin{align*}
|\mathcal{F}(E_{m_k}')&-\mathcal{F}(E_{m_k})|\\
&=|[\mathcal{E}(E_{m_k}')-\mathcal{E}(E_{m_k})]+[\mathcal{D}(E_{m_k}) -\mathcal{D}(E_{m_k}')]|\\
&\le |\mathcal{E}(E_{m_k}')-\mathcal{E}(E_{m_k})|+\frac{\mathcal{H}^{n-1}(\mathbb{S}^{n-1})}{2(n-\lambda)}(\frac{|E_{m_k}|}{|B|})^{\frac{n-\lambda}{n}} | E_{m_k} \Delta E_{m_k}'|\\
&<a_k+\frac{\mathcal{H}^{n-1}(\mathbb{S}^{n-1})}{2(n-\lambda)}(\frac{m_k}{|B|})^{\frac{n-\lambda}{n}} | E_{m_k} \Delta E_{m_k}'|.
\end{align*}

Multiplying both sides by $\gamma_k^{n-1}$, 

\begin{align} 
|\mathcal{F}(\gamma_kE_{m_k}')&-\mathcal{F}(\gamma_kE_{m_k})|\\\label{co}
&<|K|^{\frac{n-1}{n}}\frac{a_k}{m_k^{\frac{n-1}{n}}}+|K|^{\frac{n-1}{n}}\frac{\mathcal{H}^{n-1}(\mathbb{S}^{n-1})}{n-\lambda}(\frac{m_k}{|B|})^{\frac{n-\lambda}{n}} m_k^{1-\frac{n-1}{n}} 
\end{align}
and since thanks to \eqref{t3} and \eqref{a_k}, 
$$a_k=q_k m_k^{\frac{n-1}{n}}=w_k \hat{q}(\epsilon)m_k^{\frac{n-1}{n}},$$
one obtains
$$
\frac{a_k}{m_k^{\frac{n-1}{n}}}=w_k \hat{q}(\epsilon) \rightarrow 0
$$
and hence via \eqref{co},
\begin{equation} \label{t5z}
 |\mathcal{F}(\gamma_kE_{m_k}')-\mathcal{F}(\gamma_kE_{m_k})| \rightarrow 0
\end{equation}
as $k \rightarrow \infty$.
Therefore
\begin{align}
\delta(\gamma_k E_{m_k}') & \le |\delta(\gamma_k E_{m_k}')-\delta(\gamma_k E_{m_k})|+\delta(\gamma_k E_{m_k}) \\
&= \frac{1}{n|K|}|\mathcal{F}(\gamma_kE_{m_k}')-\mathcal{F}(\gamma_kE_{m_k})|+\delta(\gamma_k E_{m_k}) \label{wlkj2}\\
&\hskip .15in \rightarrow 0 
\end{align}
as $k \rightarrow \infty$ from \eqref{theconv}.
Next \\

\noindent Claim 2: There exist $x_k, x_k' \in \mathbb{R}^n$ such that

\begin{equation} \label{wl5}
\frac{|(\gamma_kE_{m_k}+x_k) \Delta K|}{|\gamma_k E_{m_k}|} \rightarrow 0,
\end{equation}
\&
\begin{equation} \label{wlej}
\frac{|(\gamma_kE_{m_k}'+x_k') \Delta K|}{|\gamma_k E_{m_k}'|} \rightarrow 0
\end{equation}
as $k \rightarrow \infty$.\\

\noindent Proof of the Claim 2:  One can avoid the explicit rate in the anisotropic isoperimetric inequality \cite{MR2672283}  with a compactness argument:\\
if \eqref{wl5} is not true, then there is an $a>0$ such that up to a subsequence

\begin{equation} \label{zn}
\inf_x \frac{|(\gamma_kE_{m_k}+x) \Delta K|}{|\gamma_k E_{m_k}|} \ge a>0.
\end{equation}
Let  $E_k:=\gamma_k E_{m_k}$ $\&$ observe that 
$$
\sup_k \mathcal{F}(E_k)<\infty
$$
via \eqref{theconv}.
Hence the compactness for sets of finite perimeter then implies that up to a subsequence, 
\begin{equation} \label{Kwul}
(E_k+z_k) \rightarrow E \text{ in } L_{loc}^1,
\end{equation}
 $$
 |E| \le \liminf_k |E_k|= |K|.
$$
$$
\mathcal{F}(E) \le \liminf_k \mathcal{F}(E_k+z_k)=\liminf_k \mathcal{F}(E_k)=\mathcal{F}(K) \le \mathcal{F}(E);
$$
mod translations, one may find a radius $s>0$ such that
\begin{equation} \label{wp}
|E_k \cap B_s| \ge \min\{a_0|E_k|,a_1\}=\min\{a_0|K|,a_1\}=:a_2
\end{equation}
where $a_l>0$, $l=0,1$ (via e.g. the nucleation lemma \cite{MR3322379}). The $a_2$ is independent of $k$.
Assuming $|E|=|K|$, the cases of equality in the anisotropic isoperimetric inequality \cite{MR1130601} then imply $E=K+z$ for $z \in \mathbb{R}^n$, a contradiction with \eqref{zn}. Therefore $|E|<|K|$. Thanks to \eqref{wp}, $|E|>0$. Hence by concentration compactness \cite{MR3322379, MR20189283} there exists $\xi \in (0,1)$ such that for $\sigma>0$ and $k$ large, there exist $A_k^\sigma, N_k^\sigma \subset E_k$,

$$
|E_k \setminus(A_k^\sigma \cup N_k^\sigma)| <\sigma
$$  

$$
||A_k^\sigma|-\xi|K||<\sigma
$$

$$
||N_k^\sigma|-(1-\xi)|K||<\sigma,
$$

$$
\text{dist}(A_k^\sigma, N_k^\sigma) \rightarrow \infty,
$$
when $k \rightarrow \infty$.
Next  one may obtain a lower bound (see e.g. \cite{MR3425373}) 
$$
\mathcal{F}(E_k) \ge \mathcal{F}(A_k^\sigma)+\mathcal{F}(N_k^\sigma)-t_k,
$$
where
$$
t_k \rightarrow 0,
$$
and apply the anisotropic isoperimetric inequality
\begin{align*}
\mathcal{F}(E_k) &\ge \mathcal{F}(A_k^\sigma)+\mathcal{F}(N_k^\sigma)-t_k \\
&\ge \mathcal{F}(\alpha_{\sigma,k} K)+\mathcal{F}(\eta_{\sigma,k} K)-t_k,
\end{align*}
$$
\alpha_{\sigma,k}=(\frac{|A_k^\sigma|}{|K|})^{\frac{1}{n}},
$$
 
$$
\eta_{\sigma,k}=(\frac{|N_k^\sigma|}{|K|})^{\frac{1}{n}}.
$$
In particular, let $\sigma \rightarrow 0$,
$$
\frac{\mathcal{F}(E_k)}{\mathcal{F}(K)}\ge \Big(\frac{\mathcal{F}((\xi)^{\frac{1}{n}} K)}{\mathcal{F}(K)}+\frac{\mathcal{F}((1-\xi)^{\frac{1}{n}} K)}{\mathcal{F}(K)}-\frac{t_k}{\mathcal{F}(K)}\Big).
$$
Now taking $k \rightarrow \infty$,
\begin{equation}\label{dj}
1 \ge (1-\xi)^{\frac{n-1}{n}}+\xi^{\frac{n-1}{n}};
\end{equation}
set 
$$
h(s)=(1-s)^{\frac{n-1}{n}}+s^{\frac{n-1}{n}}
$$
when $s \in [0,1]$. Observe $h(0)=h(1)=1$, $h'(s)=0$ only if $s=1/2$, $h(1/2)=2^{\frac{1}{n}}>1$, therefore via $\xi \in (0,1)$,
$$
(1-\xi)^{\frac{n-1}{n}}+\xi^{\frac{n-1}{n}}=h(\xi)>1,
$$
contradicting \eqref{dj}. The claim \eqref{wl5} is therefore true. Similarly, from \eqref{wlkj2}, a symmetric argument implies
$$
\frac{|(\gamma_kE_{m_k}'+x_k') \Delta K|}{|\gamma_k E_{m_k}'|} \rightarrow 0
$$
as $k \rightarrow \infty$, therefore \eqref{wlej} is true. Hence the claim is valid.\\

\noindent Thus \eqref{wl5} and \eqref{wlej} yield $k \in \mathbb{N}$ such that, via the triangle inequality in $L^1$ applied to characteristic functions,
\begin{align*}
\frac{|(E_{m_k}'+\frac{(x_k'-x_k)}{\gamma_k}) \Delta E_{m_k}|}{|E_{m_k}|}&=\frac{|(\gamma_kE_{m_k}'+x_k')  \Delta (\gamma_kE_{m_k}+x_k) |}{|\gamma_k E_{m_k}|}\\
&\le \frac{|(\gamma_kE_{m_k}+x_k) \Delta K|}{|\gamma_k E_{m_k}|}+\frac{|K\Delta (\gamma_kE_{m_k}'+x_k')|}{|\gamma_k E_{m_k}|}\\
&<\frac{\epsilon}{2},
\end{align*}
a contradiction to

$$
(\frac{|K|}{m_k})^{\frac{n-1}{n}} |\mathcal{F}(E_{m_k})-\mathcal{F}(E_{m_k}')|= |\mathcal{F}(\gamma_kE_{m_k}')-\mathcal{F}(\gamma_kE_{m_k})| \rightarrow 0,
$$
see \eqref{t5z},
\begin{align*}
\frac{|(E_{m_k}'+\frac{(x_k'-x_k)}{\gamma_k}) \Delta E_{m_k}|}{|E_{m_k}|}\ge \frac{\epsilon}{2}>0,
\end{align*}
cf. \eqref{wlje1}. 
\end{proof}

\subsection{Proof of Theorem \ref{@'r}}

\begin{proof}
Theorem \ref{@'} initially implies that there exists $m_0>0$ such that $E_m$ is, up to a translation, unique for $m<m_0$. 
Next define
$$
\mathcal{M}=\sup \{m_0: \text{ $E_m$ is unique for all $m<m_0$}\}\le m_*<\infty.
$$
Thanks to Theorem 2 in \cite{MR4314139}, $\mathcal{M}=m_*$. \\

\noindent Claim: For $\epsilon>0$, $m<m_*$, there exists $w_m(\epsilon)>0$ such that if 
$|E|=|E_m|$, $E \subset B_R$, and 
$$
|\mathcal{E}(E)-\mathcal{E}(E_m)|<w_m(\epsilon),
$$
then there exists $x \in \mathbb{R}^n$ such that 
$$
\frac{|(E_m+x) \Delta E|}{|E_m|}+\Big(\frac{|K|}{m}\Big)^{\frac{n-1}{n}} |\mathcal{F}(E)-\mathcal{F}(E_m)|<\epsilon.
$$

\noindent Proof of Claim:\\
Initially, for $m \in (0,m_0)$, note that this is via Theorem \ref{@'}: the constraint $\epsilon \in (0,\epsilon_0(m))$ is not crucial; there is a constant $a_m$ such that $\epsilon \in (0,\epsilon_0(m))$ may be replaced with all $\epsilon>0$ and $q(\epsilon)m^{\frac{n-1}{n}}$ with
$$
a_m q(\epsilon).
$$
Hence suppose $m \in [m_0, m_*)$ and assume the statement is not true, then there exists $\epsilon>0$ and for $w>0$, there exist $E_w'$ and $E_m$, $|E_w'|=|E_m|=m$, 

$$
|\mathcal{E}(E_w')-\mathcal{E}(E_m)|<w
$$

$$
\inf_{x} \frac{|(E_m+x)\Delta E_w'|}{|E_m|} +\Big(\frac{|K|}{m}\Big)^{\frac{n-1}{n}} |\mathcal{F}(E_w')-\mathcal{F}(E_m)|\ge \epsilon.
$$
In particular, set $w=\frac{1}{k}$, $k \in \mathbb{N}$; observe that there exist $E_{\frac{1}{k}}'$, $|E_{\frac{1}{k}}'|=m$,

 $$
|\mathcal{E}(E_m)-\mathcal{E}(E_{\frac{1}{k}}')|<\frac{1}{k},
$$

$$
\inf_{x} \frac{|(E_m+x)\Delta E_{\frac{1}{k}}'|}{|E_m|} +\Big(\frac{|K|}{m}\Big)^{\frac{n-1}{n}} |\mathcal{F}(E_{\frac{1}{k}}')-\mathcal{F}(E_m)|\ge \epsilon.
$$
Thus

$$
\mathcal{F}(E_{\frac{1}{k}}') \le \mathcal{E}(E_{\frac{1}{k}}') < \frac{1}{k}+\mathcal{E}(E_m), 
$$

$E_{\frac{1}{k}}' \subset B_R$, and the compactness for sets of finite perimeter, up to a subsequence, thus implies 

$$
E_{\frac{1}{k}}' \rightarrow E' \hskip .3in in \hskip .08in L^1(B_R), 
$$
and therefore $|E'|=m$,

$$
\mathcal{E}(E') \le \liminf_k \mathcal{E}(E_{\frac{1}{k}}') =\mathcal{E}(E_m),
$$
which implies
$E'$ is a minimizer. Observe that since  $E_{\frac{1}{k}}' \rightarrow E'$ \& 
$$\mathcal{E}(E_{\frac{1}{k}}') \rightarrow \mathcal{E}(E'),$$ 
it is also true that 

$$
\mathcal{F}(E_{\frac{1}{k}}') \rightarrow \mathcal{F}(E') 
$$
contradicting
$$
\inf_{x} \frac{|(E_m+x)\Delta E'|}{|E_m|} +\Big(\frac{|K|}{m}\Big)^{\frac{n-1}{n}} |\mathcal{F}(E')-\mathcal{F}(E_m)|\ge \epsilon,
$$
via Theorem 2 in \cite{MR4314139} because the minimizer is a ball. In particular, the claim is true.\\
Set
$$
a(\lambda, m,\epsilon)=w_m(\epsilon).
$$
Suppose
$$
\limsup_{m \rightarrow m_*^-} a(\lambda, m,\epsilon)>0,
$$
and let $\{m_k\}$ be a sequence so 

$$
\limsup_{m \rightarrow m_*^-} a(\lambda, m,\epsilon)=\lim_{k \rightarrow \infty} a(\lambda, m_k,\epsilon).
$$
Via Theorem 2 in \cite{MR4314139}, the mass $m_*$ balls are minimizers. 
If there is a minimizer $E_{m_*}$ which is not a ball, when $B^{m_k}$ are balls with mass $m_k$ centered at the origin, define
$$
\epsilon=\inf_x \frac{|(x+B^{m_*}) \Delta E_{m_*}|}{|E_{m_*}|}>0.
$$
Set $|e_kE_{m_*}|=m_k$ so that 
$$
e_kE_{m_*} \rightarrow E_{m_*}.
$$
Observe in addition
$$
B^{m_k} \rightarrow B^{m_*}.
$$
Thanks to $e_k \rightarrow 1$,
$$
\mathcal{E}(e_kE_{m_*}) \rightarrow \mathcal{E}(E_{m_*});
$$
moreover
$$
\mathcal{E}(B^{m_k}) \rightarrow \mathcal{E}(B^{m_*}).
$$
Hence
$$
\lim_{k \rightarrow \infty} a(\lambda, m_k,\frac{\epsilon}{2})>0
$$
yields for $k$ sufficiently large,
\begin{align*}
|\mathcal{E}(e_kE_{m_*})-\mathcal{E}(B^{m_k})|&\le |\mathcal{E}(e_kE_{m_*})-\mathcal{E}(E_{m_*})|+|\mathcal{E}(B^{m_*})-\mathcal{E}(B^{m_k})|\\
&<w_{m_k}(\frac{\epsilon}{2}).
\end{align*}
Therefore, because minimizers are essentially bounded \cite{MR3055587, MR3272365}, $e_kE_{m_*} \subset B_R$ for an $R>0$ (uniformly in $k$), thus the claim yields $x_k \in \mathbb{R}^n$ such that 
$$
\frac{|(e_kE_{m_*}+x_k) \Delta B^{m_k}|}{|B^{m_k}|}<\frac{\epsilon}{2}.
$$
Next note that if $k$ is large,
$$
\frac{\epsilon}{2}>\frac{|(e_kE_{m_*}+x_k) \Delta B^{m_k}|}{|B^{m_k}|} \approx \frac{|(E_{m_*}+x_k) \Delta B^{m_*}|}{|B^{m_*}|} \ge \epsilon,
$$
a contradiction. Therefore this yields the $m=m_*$ endpoint uniqueness.\\

\noindent Now note that the modulus in the claim yields
$$
w_{m}(\frac{|(E+x_0) \Delta E_m|}{|E_m|}+\Big(\frac{|K|}{m}\Big)^{\frac{n-1}{n}} |\mathcal{F}(E)-\mathcal{F}(E_m)| ) \le |\mathcal{E}(E_m)-\mathcal{E}(E)| 
$$
because 
the continuity implies the existence of a non-decreasing $\phi: \mathbb{R}_+ \rightarrow \mathbb{R}_+$, so that
$$
\phi(\frac{|(E+x_0) \Delta E_m|}{|E_m|}+\Big(\frac{|K|}{m}\Big)^{\frac{n-1}{n}} |\mathcal{F}(E)-\mathcal{F}(E_m)| ) \le |\mathcal{E}(E_m)-\mathcal{E}(E)|. 
$$
Set 
$$\overline{w}_m(\epsilon)=\phi(\epsilon),$$
then if 
$$
|\mathcal{E}(E_m)-\mathcal{E}(E)|<\overline{w}_m(\epsilon),
$$
observe 
$$
\phi(\frac{|(E+x_0) \Delta E_m|}{|E_m|}+\Big(\frac{|K|}{m}\Big)^{\frac{n-1}{n}} |\mathcal{F}(E)-\mathcal{F}(E_m)| ) \le |\mathcal{E}(E_m)-\mathcal{E}(E)|<\overline{w}_m(\epsilon),
$$
and this implies
$$
\overline{w}_m(\frac{|(E+x_0) \Delta E_m|}{|E_m|}+\Big(\frac{|K|}{m}\Big)^{\frac{n-1}{n}} |\mathcal{F}(E)-\mathcal{F}(E_m)| )<\overline{w}_m(\epsilon).
$$
In particular,
$$
\frac{|(E+x_0) \Delta E_m|}{|E_m|}+\Big(\frac{|K|}{m}\Big)^{\frac{n-1}{n}} |\mathcal{F}(E)-\mathcal{F}(E_m)|<\epsilon.
$$
Hence one can select $w_m(\epsilon)=\phi(\epsilon).$ 
Thus
\begin{align*}
&w_{m}(\frac{|(E+x_0) \Delta E_m|}{|E_m|}+\Big(\frac{|K|}{m}\Big)^{\frac{n-1}{n}} |\mathcal{F}(E)-\mathcal{F}(E_m)| ) \\
&\le |\mathcal{E}(E_m)-\mathcal{E}(E)| \\
&\le |\mathcal{F}(E)-\mathcal{F}(E_m)|+|\mathcal{D}(E+x_0)-\mathcal{D}(E_m)|\\
&\le |\mathcal{F}(E)-\mathcal{F}(E_m)|+ \frac{\mathcal{H}^{n-1}(\mathbb{S}^{n-1})}{2(n-\lambda)}\Big(\frac{m}{|B|}\Big)^{\frac{n-\lambda}{n}}m \frac{| (E+x_0) \Delta E_{m}|}{m}\\
&\le\max\Big\{(\Big(\frac{|K|}{m}\Big)^{\frac{n-1}{n}})^{-1}, \frac{\mathcal{H}^{n-1}(\mathbb{S}^{n-1})}{2(n-\lambda)}\Big(\frac{m}{|B|}\Big)^{\frac{n-\lambda}{n}}m \Big\} \Big( \Big(\frac{|K|}{m}\Big)^{\frac{n-1}{n}}|\mathcal{F}(E)-\mathcal{F}(E_m)|+  \frac{| (E+x_0) \Delta E_{m}|}{m}\Big).\\
\end{align*}
Therefore supposing $s>0$, choose a set $E_s$ such that
$$
s=\frac{|(E_s+x_0) \Delta E_m|}{|E_m|}+\Big(\frac{|K|}{m}\Big)^{\frac{n-1}{n}} |\mathcal{F}(E_s)-\mathcal{F}(E_m)|. 
$$
Hence
$$
w_{m}(s) \le \max\Big\{(\Big(\frac{|K|}{m}\Big)^{\frac{n-1}{n}})^{-1}, \frac{\mathcal{H}^{n-1}(\mathbb{S}^{n-1})}{2(n-\lambda)}\Big(\frac{m}{|B|}\Big)^{\frac{n-\lambda}{n}}m \Big\} s,
$$

$$
a(\lambda, m,\epsilon) \le \max\Big\{\Big(\frac{m}{|K|}\Big)^{\frac{n-1}{n}}, \frac{\mathcal{H}^{n-1}(\mathbb{S}^{n-1})}{2(n-\lambda)}\Big(\frac{m}{|B|}\Big)^{\frac{n-\lambda}{n}}m \Big\} \epsilon.
$$

\end{proof}

\subsection{Proof of Theorem \ref{@'rj}}
In the proof of Theorem \ref{@'rj}, the technique in \cite{MR4314139} is applied.
\begin{lem}
Assume $n \ge 2$, $\lambda \in (0,n)$. Also, let $E$ be a minimizer for $E(m)$ with $m>0$. Then 
$$
\int \int_{E \times E} \frac{1}{|z-y|^{\lambda-1}}dz dy \le a_{n,f}|E|, 
$$
$$a_{n,f}=\frac{2\max_{\nu \in \mathbb{S}^{n-1}}\{f(\nu)\}\mathcal{H}^{n-1}(\mathbb{S}^{n-1})}{|B^{n-1}|}.$$
\end{lem}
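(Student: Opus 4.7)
The plan is to follow the translation-competitor argument of Frank and Nam, then average over all hyperplane directions to recover the isotropic kernel $|z-y|^{1-\lambda}$. Fix $\nu \in \mathbb{S}^{n-1}$ and $t \in \mathbb{R}$, let $H_{\nu,t} = \{x : x\cdot \nu = t\}$, and split $E = E^- \cup E^+$ where $E^{\pm} = E \cap \{\pm(x\cdot \nu - t) > 0\}$. For $s > 0$, I use the mass-preserving competitor $E_s := E^- \cup (E^+ + s\nu)$. Because minimizers are essentially bounded (as in \cite{MR3055587, MR3272365, MR4385589}), for $s$ large these two pieces are disjoint and their mutual distance tends to infinity, so by dominated convergence the cross Coulomb term $C_s := \int_{E^- \times (E^++s\nu)} |z-y|^{-\lambda} dz\, dy \to 0$. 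The slicing identity for anisotropic perimeter gives, for $\mathcal{H}^1$-a.e.\ $t$,
$$
\mathcal{F}(E^+) + \mathcal{F}(E^-) = \mathcal{F}(E) + (f(\nu)+f(-\nu))\,\mathcal{H}^{n-1}(H_{\nu,t} \cap E),
$$
since the newly introduced reduced-boundary piece of $E^{\mp}$ is $H_{\nu,t} \cap E$ with outer unit normal $\pm \nu$. Combined with $\mathcal{D}(E) = \mathcal{D}(E^-) + \mathcal{D}(E^+) + C$ (where $C = \int_{E^-\times E^+} |z-y|^{-\lambda} dz\, dy$) and the minimality inequality $\mathcal{E}(E) \le \liminf_{s\to\infty}\mathcal{E}(E_s)$, the surface and self-Coulomb contributions cancel and leave
$$
\int_{E^- \times E^+} \frac{dz\,dy}{|z-y|^{\lambda}} \le (f(\nu)+f(-\nu))\,\mathcal{H}^{n-1}(H_{\nu,t} \cap E)
$$
for every admissible $(\nu, t)$.

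Next, I integrate this pointwise inequality in $t \in \mathbb{R}$. For fixed $(z,y) \in E\times E$, the set of $t$ for which $z \in E^-$ and $y \in E^+$ has length $((y-z)\cdot\nu)_+$, so by Fubini the $t$-integral of the left side equals $\int_{E\times E} ((y-z)\cdot\nu)_+ |z-y|^{-\lambda} dz\, dy$. By the coarea formula applied to the $1$-Lipschitz function $x \mapsto x\cdot\nu$, $\int_\mathbb{R} \mathcal{H}^{n-1}(H_{\nu,t} \cap E)\, dt = |E|$. Therefore, for each $\nu \in \mathbb{S}^{n-1}$,
$$
\int_{E\times E} \frac{((y-z)\cdot \nu)_+}{|z-y|^{\lambda}}\, dz\, dy \le (f(\nu)+f(-\nu))\,|E|.
$$

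Finally, I integrate in $\nu$ against $d\mathcal{H}^{n-1}(\nu)$ on $\mathbb{S}^{n-1}$. By rotational invariance, for any unit vector $e$,
$$
\int_{\mathbb{S}^{n-1}} (e\cdot\nu)_+\, d\mathcal{H}^{n-1}(\nu) = |B^{n-1}|,
$$
a standard computation (take $e = e_n$, parametrize by the polar angle, and note $|B^{n-1}| = \mathcal{H}^{n-2}(\mathbb{S}^{n-2})/(n-1)$). Applying this with $e = (y-z)/|y-z|$ converts the left-hand side to $|B^{n-1}|\int_{E\times E} |z-y|^{1-\lambda} dz\, dy$, while the right-hand side is bounded by $2\max_{\nu\in\mathbb{S}^{n-1}} f(\nu)\cdot \mathcal{H}^{n-1}(\mathbb{S}^{n-1})\cdot |E|$. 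Dividing by $|B^{n-1}|$ produces exactly
$$
\int\int_{E\times E} \frac{dz\,dy}{|z-y|^{\lambda - 1}} \le a_{n,f}\,|E|,
$$
as desired.

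The main technical obstacle is the anisotropic perimeter decomposition across an arbitrary hyperplane cut: I need to know that for $\mathcal{H}^1$-a.e.\ level $t$, both $E^{\pm}$ remain sets of finite perimeter and that their reduced boundaries acquire exactly the flat face $H_{\nu,t} \cap E$ with outer normals $\pm\nu$. This follows from the standard slicing theory for sets of finite perimeter combined with the fact that $f$ is continuous on $\mathbb{R}^n \setminus \{0\}$, and once it is in hand every remaining step is a routine application of Fubini, the coarea formula, and the half-width identity above.
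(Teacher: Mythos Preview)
Your argument is correct and follows essentially the same route as the paper's proof: both use the hyperplane-translation competitor to obtain the pointwise slicing inequality, integrate in $t$ via Fubini/coarea, and then average over $\nu\in\mathbb{S}^{n-1}$ using $\int_{\mathbb{S}^{n-1}}(e\cdot\nu)_+\,d\mathcal{H}^{n-1}(\nu)=|B^{n-1}|$. One cosmetic remark: you do not need essential boundedness of minimizers, since for any $s>0$ the sets $E^-\subset\{x\cdot\nu<t\}$ and $E^++s\nu\subset\{x\cdot\nu>t+s\}$ already lie in half-spaces at distance $s$, which by itself forces disjointness and drives the cross Coulomb term to zero.
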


\begin{proof}
If $\nu \in \mathbb{S}^{n-1}$, $t \in \mathbb{R}$, set 
$$
E_{\nu,t}^{\pm}=E \cap \{x: \pm \nu \cdot x > \pm t\}.
$$
Since when $\rho \ge 0$,
$$
|E_{\nu,t}^{+} \cup (E_{\nu,t}^{-}-\rho \nu)|=m,
$$
thanks to minimality, 
$$
\mathcal{E}(E_{\nu,t}^{+} \cup (E_{\nu,t}^{-}-\rho \nu)) \ge \mathcal{E}(E).
$$
Assuming $\rho>0$, 
\begin{align*}
\mathcal{F}(E_{\nu,t}^{+} \cup (E_{\nu,t}^{-}-\rho \nu))&=\mathcal{F}(E_{\nu,t}^{+})+\mathcal{F}(E_{\nu,t}^{-}-\rho \nu)\\
&= \int_{\partial^* E_{\nu,t}^{+} \cap \partial^* E} f(\nu_{E_{\nu,t}^{+}}) d\mathcal{H}^{n-1}+\int_{\{\nu \cdot x = t\} \cap E} f(-\nu) d\mathcal{H}^{n-1}\\
&+\int_{\partial^* E_{\nu,t}^{-} \cap \partial^* E} f(\nu_{E_{\nu,t}^{-}}) d\mathcal{H}^{n-1}+\int_{\{\nu \cdot x = t\} \cap E} f(\nu) d\mathcal{H}^{n-1} \\
&\le\mathcal{F}(E) + 2\max_{\nu \in \mathbb{S}^{n-1}}\{f(\nu)\}\mathcal{H}^{n-1}(\{\nu \cdot x = t\} \cap E),
\end{align*}
a.e. $t$. With $\rho \ge 0$, 
\begin{align*}
&\int \int_{\Big(E_{\nu,t}^{+} \cup (E_{\nu,t}^{-}-\rho \nu)\Big) \times \Big(E_{\nu,t}^{+} \cup (E_{\nu,t}^{-}-\rho \nu)\Big)}\frac{1}{|z-y|^{\lambda}}dz dy=\int \int_{E_{\nu,t}^{+} \times E_{\nu,t}^{+}}\frac{1}{|z-y|^{\lambda}}dz dy\\
&+\int \int_{E_{\nu,t}^{-} \times E_{\nu,t}^{-}}\frac{1}{|z-y|^{\lambda}}dz dy+2\int\int_{E_{\nu,t}^{+}\times E_{\nu,t}^{-}}\frac{1}{|z-y+\rho \nu|^{\lambda}}dz dy.
\end{align*}
Now taking $\rho \rightarrow \infty$,

\begin{align*}
\mathcal{F}(E)&+\frac{1}{2}\Big(\int \int_{E_{\nu,t}^{+} \times E_{\nu,t}^{+}}\frac{1}{|z-y|^{\lambda}}dz dy+\int \int_{E_{\nu,t}^{-} \times E_{\nu,t}^{-}}\frac{1}{|z-y|^{\lambda}}dz dy +2\int\int_{E_{\nu,t}^{+}\times E_{\nu,t}^{-}}\frac{1}{|z-y|^{\lambda}}dz dy\Big)\\
&\le \mathcal{F}(E)+2\max_{\nu \in \mathbb{S}^{n-1}}\{f(\nu)\}\mathcal{H}^{n-1}(\{\nu \cdot x = t\} \cap E)+\\
&\frac{1}{2}\Big(\int \int_{E_{\nu,t}^{+} \times E_{\nu,t}^{+}}\frac{1}{|z-y|^{\lambda}}dz dy+\int \int_{E_{\nu,t}^{-} \times E_{\nu,t}^{-}}\frac{1}{|z-y|^{\lambda}}dz dy \Big);\\
\end{align*}
therefore
\begin{align*}
\int\int_{E \times E} \chi_{\{\nu \cdot z>t>\nu \cdot y \}} \frac{1}{|z-y|^\lambda}dzdy&=\int\int_{E_{\nu,t}^{+}\times E_{\nu,t}^{-}}\frac{1}{|z-y|^{\lambda}}dz dy\\
& \le 2\max_{\nu \in \mathbb{S}^{n-1}}\{f(\nu)\}\mathcal{H}^{n-1}(\{\nu \cdot x = t\} \cap E).
\end{align*}
Integrating,
\begin{align*}
\int_{\mathbb{R}}\int\int_{E \times E}  \chi_{\{\nu \cdot z>t>\nu \cdot y \}}\frac{1}{|z-y|^\lambda}dzdydt &=\int\int_{E \times E} (\nu \cdot (z-y))_+\frac{1}{|z-y|^\lambda}dzdy\\
&\le \int_{\mathbb{R}} 2\max_{\nu \in \mathbb{S}^{n-1}}\{f(\nu)\}\mathcal{H}^{n-1}(\{\nu \cdot x = t\}\cap E )dt\\
&=2\max_{\nu \in \mathbb{S}^{n-1}}\{f(\nu)\}|E|.
\end{align*}
Observe that if $z-y \neq 0$, a rotation yields $\frac{z-y}{|z-y|}=e_n$, therefore 
define
$$
\Sigma(x_1,x_2,\ldots,x_{n-1})=(x_1,x_2,\ldots,x_{n-1},\sqrt{1-\sum_{l=1}^{n-1} x_l^2}),
$$
and observe that $\nu \cdot e_n=(\Sigma)_n$, $d\nu= \frac{1}{\sqrt{1-\sum_{l=1}^{n-1} x_l^2}}dx'$, $x=(x',x_n)$. Therefore 

$$
\int_{\mathbb{S}^{n-1}} (\nu \cdot \frac{z-y}{|z-y|})_+ d\nu=\int_{\mathbb{S}^{n-1}_+} (\nu \cdot e_n) d\nu=\int_{B^{n-1}} \sqrt{1-\sum_{l=1}^{n-1} x_l^2} \frac{1}{\sqrt{1-\sum_{l=1}^{n-1} x_l^2}}dx'=|B^{n-1}|.
$$
In particular
\begin{align*}
\int\int_{E \times E}|B^{n-1}|\frac{1}{|z-y|^{\lambda-1}}dzdy&=\int_{\mathbb{S}^{n-1}} \int\int_{E \times E} (\nu \cdot (z-y))_+\frac{1}{|z-y|^\lambda}dzdy d\nu \\
&\le 2\max_{\nu \in \mathbb{S}^{n-1}}\{f(\nu)\}|E|\mathcal{H}^{n-1}(\mathbb{S}^{n-1}).
\end{align*}
Hence let
$$
a_{n,f}=\frac{2\max_{\nu \in \mathbb{S}^{n-1}}\{f(\nu)\}\mathcal{H}^{n-1}(\mathbb{S}^{n-1})}{|B^{n-1}|}.
$$
\end{proof}
\begin{proof}
If $\lambda=1$,
$$
|E|^2 \le a_{n,f}|E|,
$$
in particular, $|E| \le a_{n,f}$.  Assume $\lambda \in (0,1)$.
Observe that for $t>0$, thanks to Riesz,
$$
\int \int_{E^* \times E^*} \frac{(1-e^{-t|z-y|^{1-\lambda}})}{t} dz dy \le \int \int_{E \times E} \frac{(1-e^{-t|z-y|^{1-\lambda}})}{t} dz dy
$$
where $E^*$ is the ball with $|E^*|=|E|$ and center at $0$.
Hence, letting $t \rightarrow 0^+$,
$$
\int \int_{E^* \times E^*} |z-y|^{1-\lambda} dz dy \le \int \int_{E \times E} |z-y|^{1-\lambda} dz dy \le \frac{2}{|B^{n-1}|}\max_{\nu \in \mathbb{S}^{n-1}}\{f(\nu)\}|E|\mathcal{H}^{n-1}(\mathbb{S}^{n-1})
$$
$$
\int \int_{E^* \times E^*} |z-y|^{1-\lambda} dz dy=2(\frac{|E|}{|B|})^{\frac{2n-\lambda+1}{n}} \mathcal{D}_{\lambda-1}(B)
$$
$$
|E| \le \Big((\frac{|B|^{\frac{2n-\lambda+1}{n}}}{\mathcal{D}_{\lambda-1}(B)}) \Big(\frac{1}{|B^{n-1}|}\max_{\nu \in \mathbb{S}^{n-1}}\{f(\nu)\}\mathcal{H}^{n-1}(\mathbb{S}^{n-1})\Big)\Big)^{\frac{n}{n-\lambda+1}}.
$$
\end{proof}

\newpage
\bibliographystyle{amsalpha}
\bibliography{References}

\newcommand{\etalchar}[1]{$^{#1}$}
\providecommand{\bysame}{\leavevmode\hbox to3em{\hrulefill}\thinspace}
\providecommand{\MR}{\relax\ifhmode\unskip\space\fi MR }
\providecommand{\MRhref}[2]{%
  \href{http://www.ams.org/mathscinet-getitem?mr=#1}{#2}
}
\providecommand{\href}[2]{#2}
\begin{thebibliography}{FNVDB18}

\bibitem[ABTZ21]{MR4172686}
Stan Alama, Lia Bronsard, Ihsan Topaloglu, and Andres Zuniga, \emph{A nonlocal
  isoperimetric problem with density perimeter}, Calc. Var. Partial
  Differential Equations \textbf{60} (2021), no.~1, Paper No. 1, 27.
  \MR{4172686}

\bibitem[BC14]{MR3226747}
M.~Bonacini and R.~Cristoferi, \emph{Local and global minimality results for a
  nonlocal isoperimetric problem on {$\Bbb{R}^N$}}, SIAM J. Math. Anal.
  \textbf{46} (2014), no.~4, 2310--2349. \MR{3226747}

\bibitem[BNO23]{bessas2023shape}
Konstantinos Bessas, Matteo Novaga, and Fumihiko Onoue, \emph{On the shape of
  small liquid drops minimizing nonlocal energies}, arXiv:2304.03255 (2023).

\bibitem[Boh36]{row576}
N.~Bohr, \emph{Neutron capture and nuclear constitution}, Nature \textbf{137}
  (1936), 344--348.

\bibitem[CMT17]{MR3727064}
Rustum Choksi, Cyrill~B. Muratov, and Ihsan Topaloglu, \emph{An old problem
  resurfaces nonlocally: {G}amow's liquid drops inspire today's research and
  applications}, Notices Amer. Math. Soc. \textbf{64} (2017), no.~11,
  1275--1283. \MR{3727064}

\bibitem[CNT22]{MR4385589}
Rustum Choksi, Robin Neumayer, and Ihsan Topaloglu, \emph{Anisotropic liquid
  drop models}, Adv. Calc. Var. \textbf{15} (2022), no.~1, 109--131.
  \MR{4385589}

\bibitem[CP10]{MR2653253}
Rustum Choksi and Mark~A. Peletier, \emph{Small volume fraction limit of the
  diblock copolymer problem: {I}. {S}harp-interface functional}, SIAM J. Math.
  Anal. \textbf{42} (2010), no.~3, 1334--1370. \MR{2653253}

\bibitem[FFM{\etalchar{+}}15]{MR3322379}
A.~Figalli, N.~Fusco, F.~Maggi, V.~Millot, and M.~Morini, \emph{Isoperimetry
  and stability properties of balls with respect to nonlocal energies}, Comm.
  Math. Phys. \textbf{336} (2015), no.~1, 441--507. \MR{3322379}

\bibitem[FKN16]{qkn}
R.L. Frank, R.~Killip, and P.T. Nam, \emph{Nonexistence of large nuclei in the
  liquid drop model}, Lett. Math. Phys. \textbf{106} (2016), no.~8, 1033 --
  1036.

\bibitem[FL15]{MR3425373}
Rupert~L. Frank and Elliott~H. Lieb, \emph{A compactness lemma and its
  application to the existence of minimizers for the liquid drop model}, SIAM
  J. Math. Anal. \textbf{47} (2015), no.~6, 4436--4450. \MR{3425373}

\bibitem[FM91]{MR1130601}
Irene Fonseca and Stefan M\"{u}ller, \emph{A uniqueness proof for the {W}ulff
  theorem}, Proc. Roy. Soc. Edinburgh Sect. A \textbf{119} (1991), no.~1-2,
  125--136. \MR{1130601}

\bibitem[FMM11]{MR20189283}
N.~Fusco, V.~Millot, and M.~Morini, \emph{A quantitative isoperimetric
  inequality for fractional perimeters}, J. Funct. Anal. \textbf{26} (2011),
  697--715.

\bibitem[FMP10]{MR2672283}
A.~Figalli, F.~Maggi, and A.~Pratelli, \emph{A mass transportation approach to
  quantitative isoperimetric inequalities}, Invent. Math. \textbf{182} (2010),
  no.~1, 167--211. \MR{2672283}

\bibitem[FN21]{MR4314139}
Rupert~L. Frank and Phan~Th\`anh Nam, \emph{Existence and nonexistence in the
  liquid drop model}, Calc. Var. Partial Differential Equations \textbf{60}
  (2021), no.~6, Paper No. 223, 12. \MR{4314139}

\bibitem[FNVDB18]{qknm}
R.L. Frank, P.T. Nam, and H.~Van Den~Bosch, \emph{The ionization conjecture in
  {T}homas-{F}ermi-{D}irac-von {W}eizsäcker theory}, Comm. Pure Appl. Math.
  \textbf{71} (2018), no.~3, 577 -- 614.

\bibitem[Fra19]{MR3981098}
Rupert~L. Frank, \emph{Non-spherical equilibrium shapes in the liquid drop
  model}, J. Math. Phys. \textbf{60} (2019), no.~7, 071506, 19. \MR{3981098}

\bibitem[FZ22]{MR4373173}
Alessio Figalli and Yi~Ru-Ya Zhang, \emph{Strong stability for the {W}ulff
  inequality with a crystalline norm}, Comm. Pure Appl. Math. \textbf{75}
  (2022), no.~2, 422--446. \MR{4373173}

\bibitem[Gam30]{row52}
G.~Gamow, \emph{Mass defect curve and nuclear constitution}, Proc. R. Soc.
  Lond. Ser. A \textbf{126} (1930), 632--644.

\bibitem[IK23]{qk}
E.~Indrei and A.~Karakhanyan, \emph{Minimizing the free energy},
  arXiv:2304.01866 (2023).

\bibitem[Ind21]{I21}
Emanuel Indrei, \emph{On the equilibrium shape of a crystal}, arXiv:2008.02238
  (2021).

\bibitem[Jul14]{MR3218265}
Vesa Julin, \emph{Isoperimetric problem with a {C}oulomb repulsive term},
  Indiana Univ. Math. J. \textbf{63} (2014), no.~1, 77--89. \MR{3218265}

\bibitem[KM13]{MR3055587}
Hans Kn\"{u}pfer and Cyrill~B. Muratov, \emph{On an isoperimetric problem with
  a competing nonlocal term {I}: {T}he planar case}, Comm. Pure Appl. Math.
  \textbf{66} (2013), no.~7, 1129--1162. \MR{3055587}

\bibitem[KM14]{MR3272365}
\bysame, \emph{On an isoperimetric problem with a competing nonlocal term {II}:
  {T}he general case}, Comm. Pure Appl. Math. \textbf{67} (2014), no.~12,
  1974--1994. \MR{3272365}

\bibitem[LO14]{MR3251907}
Jianfeng Lu and Felix Otto, \emph{Nonexistence of a minimizer for
  {T}homas-{F}ermi-{D}irac-von {W}eizs\"{a}cker model}, Comm. Pure Appl. Math.
  \textbf{67} (2014), no.~10, 1605--1617. \MR{3251907}

\bibitem[MT20]{qko}
O.~Misiats and I.~Topaloglu, \emph{On minimizers of an anisotropic liquid drop
  model}, arXiv:1912.09495v2 (2020).

\bibitem[MZ15]{MR3302176}
Cyrill~B. Muratov and Anthony Zaleski, \emph{On an isoperimetric problem with a
  competing non-local term: quantitative results}, Ann. Global Anal. Geom.
  \textbf{47} (2015), no.~1, 63--80. \MR{3302176}

\bibitem[Sol91]{MR1098611}
Jan~Philip Solovej, \emph{Proof of the ionization conjecture in a reduced
  {H}artree-{F}ock model}, Invent. Math. \textbf{104} (1991), no.~2, 291--311.
  \MR{1098611}

\bibitem[Sol03]{MR2018928}
\bysame, \emph{The ionization conjecture in {H}artree-{F}ock theory}, Ann. of
  Math. (2) \textbf{158} (2003), no.~2, 509--576. \MR{2018928}

\bibitem[vW35]{row57}
C.F. von Weizsäcker, \emph{Zur {T}heorie der {K}ernmassen}, Zeitschrift
  f\"{u}r Physik \textbf{96} (1935), 431--458.

\end{thebibliography}

\end{document}